\newtheorem{thm}{Theorem}
\newtheorem{lem}[thm]{Lemma}
\newtheorem{prp}[thm]{Proposition}
\theoremstyle{remark}
\newtheorem{rmk}[thm]{Remark}
\newtheorem{ex}[thm]{Example}
\theoremstyle{definition}
\newtheorem{dfn}[thm]{Definition}
\newcommand{\INVISIBLE}[1]{}
\begin{document}
\title[Bundles trivialized by proper morphisms, II]{Vector
bundles trivialized by proper morphisms and the fundamental
group scheme, II}

\author[I. Biswas]{Indranil Biswas}

\address{School of Mathematics, Tata Institute of Fundamental
Research, Homi Bhabha Road, Bombay 400005, India}

\email{indranil@math.tifr.res.in}

\author[J. P. dos Santos]{Jo\~ao Pedro P. dos Santos}

\address{Facult\'e de Math\'ematiques, Universit\'e de Paris VI. 
4, Place Jussieu, Paris  75005, 
France}

\email{dos-santos@math.jussieu.fr}
\urladdr{http://people.math.jussieu.fr/~dos-santos}
\subjclass[2000]{14L15, 14F05}

\keywords{Essentially finite vector bundle, fundamental group
scheme, trivialization}

\date{18.05.2011}

\begin{abstract}Let $X$ be a projective and smooth variety over an algebraically closed field $k$. Let $f:Y\longrightarrow X$ be a proper and surjective morphism of $k$--varieties. Assuming that $f$ is separable, we prove that the Tannakian category associated to the vector bundles $E$ on $X$ such that $f^*E$ is trivial is equivalent to the category of representations of a finite and etale group scheme. We give a counterexample to this conclusion in the absence of separability. 
\end{abstract}

\maketitle

\section{Introduction}

The present work is a continuation of \cite{tef}, giving some
applications of the main result in \cite{tef} which throw light on
the nature of the fundamental group scheme of Nori \cite{nori} for
a smooth projective variety.

Let $X$ be a smooth projective variety over an algebraically closed
field $k$. The fundamental group scheme of $X$ is the affine group scheme obtained from the (Tannakian) category of  essentially finite vector bundles on $X$ (see
Definition \ref{dfn_essfinite}).
The main theorem of \cite{tef} says that a vector bundle
$E$ over $X$ is essentially finite
if and only if there is a proper $k$--scheme $Y$ and
a surjective morphism $f:Y\longrightarrow X$ such that
$f^*E$ is trivial. As an application
of this theorem, we prove the following :
\begin{thm}\label{thm0}
Let $X$ be a smooth and projective variety over the algebraically closed field $k$, $x_0:\mathrm{Spec}(k)\longrightarrow X$ a point, and  
$f\,:\, Y\, \longrightarrow\, X$ a proper and surjective morphism of varieties. 

\emph{(i)} The full subcategory of $\mathbf{VB}(X)$
\[
\mathcal T_Y(X)=
\left\{
\text{$V\in\mathbf{VB}(X)$\,:\, $f^*V$ is trivial}
\right\}
\]
is Tannakian. The functor $x_0^*:\mathcal T_Y(X)\longrightarrow
\text{($k$--mod)}$ is a fibre functor. 

\emph{(ii)}  Assume that $f$ is separable. Let $G(Y/X)$ denote the affine group scheme obtained from $\mathcal T_Y(X)$ and $x_0^*$.  Then $G(Y/X)$ is finite and etale.

\emph{(iii)} If the separability assumption on $f$ is removed, then there exists a counterexample to the conclusion in \emph{(ii)} in which $G(Y/X)$ is not a finite group scheme. 
\end{thm}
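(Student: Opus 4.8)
The plan for part (i) is to realize $\mathcal T_Y(X)$ as a Tannakian subcategory of $\mathbf{VB}(X)$ and to verify the axioms directly. Stability under $\otimes$, duals and direct sums is immediate since $f^*$ is a tensor functor. For the abelian structure the key observation is that $Y$, being a proper variety over the algebraically closed field $k$, has $H^0(Y,\mathcal O_Y)=k$; hence any morphism of trivial bundles $\mathcal O_Y^{\oplus m}\to\mathcal O_Y^{\oplus n}$ is given by a constant matrix, so its kernel, image and cokernel are again trivial. Because $f^*$ is exact on vector bundles, the kernel and cokernel (formed in the abelian category $\mathcal{EF}(X)$ of Nori \cite{nori}) of a morphism in $\mathcal T_Y(X)$ pull back to the kernel and cokernel of a constant matrix on $Y$, hence remain trivial; so $\mathcal T_Y(X)$ is an abelian, rigid tensor subcategory. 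Finally $x_0^*$ is exact, detects the zero bundle (a bundle with vanishing fibre over the connected $X$ is $0$), and $\mathrm{End}(\mathbf 1)=H^0(X,\mathcal O_X)=k$; thus $(\mathcal T_Y(X),x_0^*)$ is neutral Tannakian.

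For part (ii) I would prove separately that $G(Y/X)$ is \'etale and that it is finite; \'etaleness is where separability enters and is the heart of the matter. Since every object of $\mathcal T_Y(X)$ is essentially finite (main theorem of \cite{tef}), it generates a Tannakian subcategory with a \emph{finite} group scheme $G_V$; as $\mathcal T_Y(X)$ is closed under subquotients by (i), $f$ trivializes all of $\mathrm{Rep}(G_V)$, so the universal $G_V$-torsor $h\colon T\to X$ acquires a section $\sigma$ after pullback to $Y$, i.e. $h\sigma=f$. Write $T'=T/G_V^0\to X$ for the maximal \'etale subcover, a finite \'etale cover, so that $T\to T'$ is a purely inseparable $G_V^0$-torsor. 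Because $Y$ is reduced, $\sigma$ factors through $T_{\mathrm{red}}$; comparing function fields on the relevant irreducible components, and using that $k(Y)/k(X)$ and $k(T')/k(X)$ are separable while $T_{\mathrm{red}}\to T'$ is purely inseparable, one obtains $k(T_{\mathrm{red}})=k(T')$. Hence $T_{\mathrm{red}}\to T'$ is finite and birational, so an isomorphism by Zariski's main theorem (as $T'$ is smooth, hence normal). This yields a section of the $G_V^0$-torsor $T\to T'$, which is therefore trivial, and consequently the bundle attached to $V$ becomes trivial on the finite \'etale cover $T'$. Thus every object of $\mathcal T_Y(X)$ is trivialized by a finite \'etale cover, so $G(Y/X)$ is a quotient of $\pi_1^{et}(X,x_0)$ and is \'etale.

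Finiteness then follows from \'etale fundamental group considerations, independently of separability. Using the Stein factorization $f=h\circ g$ with $h\colon X_1\to X$ finite and $g$ having geometrically connected fibres, flat base change shows that $g$ induces a surjection $\pi_1^{et}(Y)\twoheadrightarrow\pi_1^{et}(X_1)$, while the finite $h$ makes the image of $\pi_1^{et}(X_1)$ of finite index in $\pi_1^{et}(X)$. Hence the image of $\pi_1^{et}(Y)\to\pi_1^{et}(X)$ is open; since $G(Y/X)$ is the quotient of $\pi_1^{et}(X)$ by the normal closure of that image, it is finite. Together with \'etaleness this proves (ii).

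For part (iii) the plan is to build an inseparable example in which the local (infinitesimal) part of $G(Y/X)$ is infinite. A preliminary remark dictates the shape of the example: if $f$ were finite and purely inseparable, then $k(Y)/k(X)$ would lie in $k(X)^{1/p^m}$ for some $m$, so a power of Frobenius $F_X^m$ would factor through $f$, forcing $\mathcal T_Y(X)\subseteq\{V:(F_X^m)^*V\text{ trivial}\}$ and hence $G(Y/X)$ finite (a Frobenius kernel of $\pi(X,x_0)$); moreover the \'etale part is always finite by the Stein argument above. Therefore the counterexample must use a \emph{non-finite}, inseparable $f$. I would start from a variety $X$ whose local fundamental group scheme is infinite --- an ordinary abelian variety, whose $\pi^{\mathrm{loc}}$ is the infinite infinitesimal group $\varprojlim\mu_{p^n}$ --- and construct a proper surjective $f\colon Y\to X$ with positive-dimensional fibres that simultaneously trivializes the whole infinitesimal tower, so that $G(Y/X)$ carries an infinite infinitesimal quotient and fails to be a finite group scheme. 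The main obstacle, and the crux of (iii), is precisely producing a single such $f$ and verifying infiniteness: the finite-degree arguments that force finiteness must be evaded by the positive-dimensional fibres, and confirming that $G(Y/X)$ genuinely is not finite is the delicate point.
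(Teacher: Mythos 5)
Your parts (i) and (ii) are essentially correct, and (ii) takes a genuinely different route from the paper's Theorem \ref{main_separable1}. For (i), your constant-matrix argument ($H^0(Y,\mathcal O_Y)=k$ plus surjectivity of $f$ forces morphisms in $\mathcal T_Y(X)$ to have constant fibre rank) replaces the paper's appeal to Nori--semistability, and it works. For (ii), your ``\'etaleness first'' argument --- factor the section $Y\to T$ through $T_{\mathrm{red}}$, use separability of $R(Y)/R(X)$ to identify $R(T_{\mathrm{red}})$ with $R(T')$ where $T'=T/G_V^0$ is the \'etale quotient, and conclude $T_{\mathrm{red}}\cong T'$ because a finite birational morphism onto a normal variety is an isomorphism --- is a valid repackaging of the paper's Claims A and B that moreover avoids Zariski--Nagata purity and the maximal unramified subextension $R(V)^{\mathrm{nr}}$ altogether; and your finiteness step (openness of the image of $\pi_1^{\mathrm{et}}(Y)\to\pi_1^{\mathrm{et}}(X)$ via Stein factorization) replaces the paper's bound $\mathrm{rank}\,G_i^{\mathrm{et}}\le \deg f$. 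Two steps are asserted rather than proved, though both are fillable: ``$f$ trivializes all of $\mathrm{Rep}(G_V)$, so the torsor acquires a section over $Y$'' does not follow formally from objectwise triviality --- one needs the paper's argument (a faithful representation $G_V\hookrightarrow\mathrm{GL}(V)$ and injectivity of $H^1(Y,G_V)\to H^1(Y,\mathrm{GL}(V))$, using that $\mathrm{GL}(V)/G_V$ is affine and $Y$ is proper); and the openness of the image of $\pi_1^{\mathrm{et}}$ under a finite surjective morphism needs a component-counting argument.

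Part (iii), however, contains a genuine error that derails the construction. Your ``preliminary remark'' --- that a finite purely inseparable $f$ forces $G(Y/X)$ to be finite because $\mathcal T_Y(X)\subseteq\{V: (F_X^m)^*V \text{ trivial}\}$ --- is false. The containment is correct, but the Tannaka group of the category of $F^m$-trivialized bundles need not be finite: each individual object has local monodromy of height $\le m$, yet height bounds neither the order of the group scheme nor the number of irreducible objects (already $\alpha_p^{\oplus n}$ has height one and order $p^n$), so the inverse limit can be an infinite pro-local group scheme. Indeed, Pauly \cite{pauly} produces a smooth projective curve $X$ in characteristic two carrying infinitely many pairwise non-isomorphic stable bundles trivialized by $F^4$; since stable bundles of slope zero give irreducible representations and a finite group scheme has only finitely many irreducibles, $G(Y/X)$ is not finite for $Y=X$ and $f=F^4$. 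This finite, purely inseparable $f$ is exactly the paper's counterexample (section \ref{18.05.2011--1}): your false remark rules out precisely the construction that works and sends you looking in the wrong place.

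The fallback you propose is, moreover, provably impossible. Trivializing the whole tower $\varprojlim\mu_{p^n}$ over an ordinary abelian variety $A$ means trivializing every $p$-power torsion line bundle. But by Stein factorization (which you invoke in (ii) and overlook here) any proper surjective $f:Y\to A$ trivializes exactly the same bundles as its finite part $g:Y'\to A$, say of degree $n$; and the projection formula $g_*\bigl(c_1(g^*L)\cap [Y']\bigr)=n\,c_1(L)\cap[A]$ shows that $g^*L\cong\mathcal O_{Y'}$ forces $L^{\otimes n}\cong\mathcal O_A$. So only the $n$-torsion of $\mathrm{Pic}(A)$ can die under any single such $f$: positive-dimensional fibres buy nothing, and no proper surjective morphism trivializes the $\mu_{p^\infty}$-tower. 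Since you yourself flag that producing such an $f$ and verifying infiniteness is ``the crux'', part (iii) stands unproved; the fix is simply Lemma \ref{finite_stable} applied to Pauly's example with $f$ a power of Frobenius.
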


Part (i) of the above theorem is routine, see Lemma \ref{17.05.2011--1}. Part (ii) is the subject of Theorem \ref{main_separable1}, while the counterexample alluded to in (iii) is produced in section \ref{18.05.2011--1}.  

\section{Preliminaries}

\subsection{Notation and terminology}\label{terminology}
Throughout $k$ will stand for an algebraically closed field; by a
variety we mean an integral scheme of finite type over $k$.

Let $V$ be a normal
variety. Its field of rational functions will be denoted by $R(V)$.
We will let $\mathrm{Val}(V)$ denote the set of discrete valuations
of $R(V)$ associated to $V$: a discrete valuation 
$v:R(V)\longrightarrow \mathds Z\cup\{\infty\}$ belongs to
$\mathrm{Val}(V)$ if and only if there exists a point $\xi$ of
codimension one in $V$ such that $\mathcal O_{V,\xi}=\{\varphi\in
R(V):\,v(\varphi)\ge 0\}$.

Given a finite extension of fields $L/K$ and a set of discrete
valuations $S$ of $K$, we say that $L$ is unramified above $S$ if
for each discrete valuation $v$ of $S$ and each prolongation $w$ of
$v$ to $L$, the ramification index $e(w/v)=1$ and the extension of
residue fields is separable.

A dominant morphism $f:W\longrightarrow V$ between two varieties is
\emph{separable} if the extension of function fields $R(W)/R(V)$ is
\emph{separable}. (This differs from the homonymous notion defined in
[SGA1 X, Definition 1.1].)

A vector bundle over a scheme is a locally free coherent sheaf. The
category of all vector bundles on $X$ will be denoted by
$\mathbf{VB}(X)$.
If $E$ is a vector bundle over the $k$--scheme $X$, we will say that
$E$ comes from a representation of the \'etale fundamental group if
there exists a finite group $\Gamma$, a representation
\[
\rho\,:\,\Gamma\,\longrightarrow\, \mathrm{GL}_m(k)
\]
and an \'etale Galois covering of
group $\Gamma$, $Y\longrightarrow X$, such that
\[
E\,\cong\, Y\times^\Gamma k^{\oplus m}\, .
\]
(For the general  definition of the contracted product of a torsor and a representation, see e.g. \cite[I 5.8, 5.14]{J}.)

Given an affine group scheme $G$ over $k$, we will let 
$\mathrm{Rep}(G)$ denote the category of all finite dimensional 
representations of $G$ \cite[Ch. 3]{waterhouse}. A morphism of affine 
group schemes $f:G\longrightarrow H$ is a quotient morphism if it is 
faithfully flat, or, equivalently, if the homomorphism induced on the 
function rings is injective \cite[Ch. 14]{waterhouse}.

If $\mathcal T$ is a Tannakian category over $k$ \cite{deligne-milne} and $V\in\mathcal T$, 
we define the monodromy category of $V$ to be the smallest Tannakian 
sub--category of $\mathcal T$ containing $V$: it will be denoted by 
$\langle V ;\mathcal T\rangle_\otimes$.

Consider the neutral Tannakian category $\mathrm{Rep}(G)$
over $k$. For any $V\, \in\, \mathrm{Rep}(G)$, the 
category $\langle V; \mathrm{Rep}(G)\rangle_\otimes$ 
is equivalent to the category of representations of 
the image of the tautological homomorphism $\rho_V:G\longrightarrow 
\mathrm{GL}(V)$. This image will be called the monodromy group of $V$;
see Definition 2.5 and the remark after it in \cite{tef} 
for more information.

\subsection{Vector bundles trivialized by proper and surjective
morphisms}\label{main_tef}

Let $X$ be a smooth and 
projective variety over $k$ (recall that $k$ is algebraically
closed).

\begin{dfn}[Property T]\label{propt}A vector bundle $E$ over $X$ is
said to have \emph{property (T)} if there exists a proper $k$--scheme
$Y$ together with
a surjective
(proper) morphism $f:Y\longrightarrow X$ such that the pull--back
$f^*E$
is trivial.
\end{dfn}

The main result of \cite{tef}
 relates property (T) to the more sophisticated notion of
essential finiteness.

\begin{dfn}\label{dfn_essfinite} Following Nori \cite{nori}, we say
that a vector bundle over $X$ is \emph{essentially finite} if there
exists a finite group scheme $G$, a $G$--torsor $P\longrightarrow X$
and a representation $\rho: G\longrightarrow \mathrm{GL}(V)$, such
that
\[
P\times^GV\,\cong\, E\, .
\]
The category of all essentially finite vector 
bundles over $X$ will be denoted by $\mathbf{EF}(X)$.
\end{dfn}
\begin{rmk}Every essentially finite vector bundle enjoys
property (T) as these are trivialized by a torsor under a finite
group scheme.

The category $\mathbf{EF}(X)$ is Tannakian \cite{nori}. The above definition of 
essential finiteness is not the one presented
in \cite{nori}, but a consequence of the results of that work.
\end{rmk}

\begin{thm}\cite[Theorem 1.1]{tef}\label{theorem_main}
A vector bundle $E$ over $X$ is
essentially finite if and only if it satisfies property (T).
\end{thm}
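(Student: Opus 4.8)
The plan is to prove both implications, the implication ``essentially finite $\Rightarrow$ property (T)'' being routine and the converse carrying all the content. If $E\cong P\times^GV$ for a finite group scheme $G$ and a $G$--torsor $\pi\colon P\to X$, then $\pi$ is finite (hence proper) and surjective and $\pi^*E$ is trivial, so $E$ has property (T); this is the remark already recorded after Definition \ref{dfn_essfinite}. For the converse I would start from a surjective proper $f\colon Y\to X$ with $f^*E$ trivial and first reduce $f$ to a convenient shape: replacing $Y$ by an alteration of an irreducible component dominating $X$ and then intersecting with general hyperplane sections, I may assume that $Y$ is smooth and projective and that $f$ is generically finite and surjective, triviality of $f^*E$ being preserved at each step. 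Next I would show that $E$ is \emph{numerically flat}: since $f^*E$ and $f^*E^{\vee}$ are trivial, hence nef, and nefness descends along the proper surjective morphism $f$, both $E$ and $E^{\vee}$ are nef; and since $f^*c_1(E)=0$ while $f^*$ is injective on numerical divisor classes modulo torsion, $c_1(E)=0$. Thus $E$ lies in the category $\mathcal N(X)$ of numerically flat bundles, a Tannakian subcategory of $\mathbf{VB}(X)$.

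I would then pass to the monodromy category $\mathcal C=\langle E;\mathcal N(X)\rangle_\otimes$, equipped with the fibre functor $x_0^*$ and its monodromy group $G\subseteq\mathrm{GL}(E_{x_0})$, so that $\mathcal C\simeq\mathrm{Rep}(G)$. The key closure property is that \emph{every} object of $\mathcal C$ is again trivialized by $f$. Indeed, an object $W$ is a subbundle of some $T=\bigoplus E^{\otimes a}\otimes(E^{\vee})^{\otimes b}$ and is numerically flat; since $f^*T$ is trivial, $f^*W$ is a numerically flat subbundle of a trivial bundle on the proper variety $Y$, and any such bundle is trivial (its dual is globally generated and numerically flat, hence free). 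Consequently $\mathcal C\subseteq\mathcal T_Y(X)$, and it remains only to prove that $G$ is a finite group scheme.

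To see that $\dim G=0$, suppose not, so that the reduced connected component $G^{\circ}_{\mathrm{red}}$ is a positive--dimensional algebraic group and hence contains a copy of $\mathbb G_m$ or of $\mathbb G_a$. A subgroup $\mathbb G_m$ would yield, through a nontrivial character, an infinite family $\{L^{\otimes n}\}_{n\in\mathbb Z}$ of pairwise non--isomorphic degree--zero line bundles lying in $\mathcal C$, all trivialized by $f$; but $\ker\bigl(f^*\colon\mathrm{Pic}(X)\to\mathrm{Pic}(Y)\bigr)$ is finite, being of finite kernel on $\mathrm{Pic}^0$ and injective modulo torsion on $\mathrm{NS}$, a contradiction. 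A subgroup $\mathbb G_a$ would yield a nonsplit self--extension of the unit object trivialized by $f$, that is, a nonzero class in $\ker\bigl(f^*\colon H^1(X,\mathcal O_X)\to H^1(Y,\mathcal O_Y)\bigr)$ generating an infinite tower of indecomposables; I would rule this out by showing that such a class is annihilated by a power of the Frobenius, so that the unipotent part of $G$ is a finite infinitesimal group scheme and not $\mathbb G_a$. Hence $G^{\circ}_{\mathrm{red}}=1$ and $G$ is finite. Tannaka duality applied to $(\mathcal C,x_0^*)$ then produces a $G$--torsor $P\to X$ with $E\cong P\times^GE_{x_0}$, and since $G$ is finite this exhibits $E$ as essentially finite.

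The step I expect to be the main obstacle is the final finiteness statement, and within it the $\mathbb G_a$ (unipotent) case in positive characteristic with $f$ inseparable. There I anticipate needing the factorization of the inseparable part of $f$ through a power of the relative Frobenius to force the relevant extension classes to be Frobenius--nilpotent, thereby guaranteeing an infinitesimal, and hence finite, unipotent monodromy; by contrast the multiplicative part is eliminated by the essentially formal Picard and N\'eron--Severi finiteness, and the whole argument rests on the descent lemma of the second paragraph, which confines the entire category $\mathcal C$ inside $\mathcal T_Y(X)$.
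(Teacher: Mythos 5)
Your statement is Theorem \ref{theorem_main}, which this paper does not prove: it is imported verbatim from \cite{tef}, so the comparison below is with the strategy of that proof (whose torsor--theoretic flavour is visible in the proof of Theorem \ref{main_separable1} here). The uncontroversial parts of your proposal are fine and consistent with the framework of both papers: the forward implication, the reduction of $f$, the numerical flatness of $E$ (this is \cite[Proposition 2.2]{tef}), and the closure property that every object of $\mathcal C=\langle E\rangle_\otimes$ is again trivialized by $f$ (this is the substance of Lemma \ref{17.05.2011--1}). The genuine gap is in the finiteness of $G$, which is the entire content of the theorem, and it is not a repairable detail: both halves of your dichotomy rest on the inference ``a subgroup $H\subseteq G$ yields objects of $\mathcal C$ coming from representations of $H$'', which is backwards. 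Objects of $\mathcal C$ are representations of $G$; a subgroup only gives a restriction functor \emph{out of} $\mathcal C$. A subgroup $\mathbb G_m\subseteq G$ produces no characters of $G$ (take $G=\mathrm{SL}_2$, which contains $\mathbb G_m$ while $\mathrm{Rep}(\mathrm{SL}_2)$ has no nontrivial one--dimensional objects), and a subgroup $\mathbb G_a\subseteq G$ produces no nonsplit self--extension of the unit ($\mathrm{Ext}^1_{\mathrm{SL}_2}(\mathds 1,\mathds 1)=0$). What your two arguments actually rule out are \emph{quotients} $G\twoheadrightarrow\mathbb G_m$ (via finiteness of $\ker(f^*:\mathrm{Pic}(X)\to\mathrm{Pic}(Y))$) and $G\twoheadrightarrow\mathbb G_a$ (via the finite--dimensionality of $H^1(X,\mathcal O_X)$ against the infinite--dimensional $\mathrm{Ext}^1_{\mathbb G_a}(\mathds 1,\mathds 1)$); but a positive--dimensional $G$ need not admit either as a quotient (again $\mathrm{SL}_2$), so the case analysis does not close.

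Moreover, the claim on which your unipotent case rests --- that a class in $\ker\bigl(f^*:H^1(X,\mathcal O_X)\to H^1(Y,\mathcal O_Y)\bigr)$ is annihilated by a power of Frobenius, so that the unipotent part of $G$ is infinitesimal --- is false. Let $X$ be an \emph{ordinary} elliptic curve and $f=V:X^{(p)}\to X$ the Verschiebung, which is finite \'etale of degree $p$. Since the dual of $V$ is the Frobenius of the dual elliptic curve, $V^*$ acts on $H^1(X,\mathcal O_X)=\mathrm{Lie}\,\mathrm{Pic}^0(X)$ as the differential of a Frobenius, hence is zero; so every class lies in the kernel. Yet ordinarity means precisely that the absolute Frobenius acts bijectively on $H^1(X,\mathcal O_X)$, so no nonzero class is Frobenius--nilpotent. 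The resulting nonsplit extensions of $\mathcal O_X$ by $\mathcal O_X$ are trivialized by the \'etale cover $V$ and have monodromy $\mathds Z/p$: finite, but \'etale rather than infinitesimal --- exactly as Theorem \ref{main_separable1} predicts for separable $f$, and contrary to your proposed mechanism. This unipotent, positive--characteristic region is where all the difficulty of the theorem sits, and the proof in \cite{tef} (echoed in Section \ref{s1} here: Stein factorization, Claims A and B, Zariski--Nagata purity, and \cite[Proposition 1.2]{ls}) handles it by torsor--theoretic means --- reductions of structure group to finite \'etale groups and Frobenius kernels --- not by structure theory of the monodromy group; note also that any argument deducing a contradiction merely from ``infinitely many non--isomorphic bundles trivialized by $f$'' is doomed, since Pauly's example (Section \ref{18.05.2011--1}) shows this genuinely happens.
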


The reader is urged to read Remark \ref{parameswaran_remark} at the end of 
this text to be directed to another proof of the case where $\dim X=1$; this 
proof was suggested to us by Parameswaran and is based on \cite[\S 
6]{BP} (which contains very interesting conceptual advancements). 
Also, we indicate that recently Antei and Mehta put forward a generalisation of Theorem \ref{theorem_main} in the case where $X$ is only normal \cite{antei_mehta}.

It should be clarified that the smoothness condition on Theorem \ref{theorem_main}
cannot be dropped; this is shown by the following
example:

\begin{ex}\label{main_tef_example}
Let $X\subset\mathds P^2_k$ be the nodal cubic defined by 
$(y^2z=x^3+x^2z)$. Let 
\[
f:\mathds P^1\longrightarrow X,\quad (s:t)\mapsto (s^2t-t^3:s^3-st^2:t^3),
\]
be the birational morphism which identifies the points $(1:1)$ and 
$(-1:1)$. It is well--known that $\mathrm{Pic}^0(X)=k^*$, so that any 
line bundle $L$ of infinite order over $X$ gives a counter--example to 
the generalization of Theorem \ref{theorem_main} to the case where $X$ 
is not normal.  
\end{ex}

\subsection{The fundamental group--scheme}
Fix a $k$--rational point $x_0:\mathrm{Spec}(k)\longrightarrow X$. 
The essentially finite vector bundles with the fibre functor defined by
sending any essentially finite vector bundle $E$ to its fibre $x_0^*E$ over $x_0$
form a neutral Tannakian category \cite[Definition 2.19]{deligne-milne}. The corresponding affine 
group--scheme over $k$ \cite[Theorem 2.11]{deligne-milne} is called the \textit{fundamental group--scheme} 
\cite{nori}, \cite{nori-thesis}. This group--scheme will be denoted by
$\Pi^\mathrm{EF}(X,x_0)$.

\section{Vector bundles trivialized by separable proper
morphisms}\label{s1}
Throughout this section, we  let $X$ stand for a projective and smooth variety and $f:Y\longrightarrow X$ for a proper surjective morphism from a (proper) variety $Y$. We  also choose a $k$--rational point $x_0:\mathrm{Spec}(k)\longrightarrow X$.

\subsection{The object of our study}\label{object_of_study} For general terminology on Tannakian categories the reader should consult \cite{deligne-milne}. 
\begin{lem}\label{17.05.2011--1}The full subcategory of $\mathbf{VB}(X)$
\[
\mathcal T_Y(X)=
\left\{
\text{$V\in\mathbf{VB}(X)$; $f^*V$ is trivial}
\right\}
\]
is Tannakian. The functor $x_0^*:\mathcal T_Y(X)\longrightarrow
\text{($k$--mod)}$ is a fibre functor. 
\end{lem}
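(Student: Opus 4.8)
The plan is to verify that $\mathcal T_Y(X)$ satisfies the axioms of a Tannakian category by exhibiting it as a suitable subcategory of $\mathbf{VB}(X)$ that is closed under the tensor operations, and then to check that $x_0^*$ is a fibre functor. First I would observe that $\mathbf{VB}(X)$ is itself a rigid abelian $k$--linear tensor category, and that pullback $f^*$ is an exact $k$--linear tensor functor commuting with duals, tensor products, direct sums, and the formation of the unit object $\mathcal O_X$. The key point is therefore to show that $\mathcal T_Y(X)$, defined as the objects $V$ with $f^*V$ trivial, is a full abelian tensor subcategory closed under these constructions and under taking subquotients.

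The main steps would be as follows. For closure under tensor constructions: if $f^*V$ and $f^*W$ are trivial, then $f^*(V\otimes W)\cong f^*V\otimes f^*W$, $f^*(V^\vee)\cong (f^*V)^\vee$, and $f^*(V\oplus W)$ are all trivial, and $\mathcal O_X\in\mathcal T_Y(X)$ since $f^*\mathcal O_X\cong\mathcal O_Y$; this shows $\mathcal T_Y(X)$ is a rigid tensor subcategory with unit object. The delicate point is abelianness, that is, closure under kernels and cokernels: given a morphism $\varphi:V\to W$ in $\mathcal T_Y(X)$, I must show that $\ker\varphi$ and $\mathrm{coker}\,\varphi$, a priori only coherent sheaves on $X$, are in fact \emph{vector bundles} and that their pullbacks are trivial. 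Here I would use the standard fact that in the Tannakian setting one works with the image, kernel, and cokernel computed inside a larger abelian category and then checks they remain locally free. Since $f$ is faithfully flat onto its image (after passing to an appropriate reduction, using surjectivity), $f^*$ is faithful and exact, so $f^*(\ker\varphi)=\ker(f^*\varphi)$ and $f^*(\mathrm{coker}\,\varphi)=\mathrm{coker}(f^*\varphi)$; as $f^*V$ and $f^*W$ are trivial bundles on $Y$ and $Y$ is proper with $H^0(\mathcal O_Y)=k$, the morphism $f^*\varphi$ is given by a constant matrix, whence its kernel and cokernel are again trivial bundles, forcing $\ker\varphi$ and $\mathrm{coker}\,\varphi$ to be locally free on $X$.

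I expect the main obstacle to be precisely this verification that subquotients stay within $\mathbf{VB}(X)$: the subtlety is that a subsheaf or quotient of a vector bundle need not be locally free in general, so one must genuinely exploit the triviality of the pullback together with the properness of $Y$ (so that global sections of $\mathcal O_Y$ reduce to constants) to pin down the local structure. The argument that $f^*\varphi$ is a constant matrix, and hence has locally free kernel and cokernel whose ranks are locally constant, is what descends to show local freeness on $X$. One should also be careful that $f^*$ remains exact; this follows from flatness of $f^*$ as a functor on sheaves, which holds without hypotheses since $\mathcal O_X\to f_*\mathcal O_Y$ and pullback of bundles is always right exact, while triviality upstairs supplies the needed left exactness on the objects at hand.

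Finally, for the fibre functor: $x_0^*$ is an exact, faithful, $k$--linear tensor functor from $\mathcal T_Y(X)$ to the category of finite dimensional $k$--vector spaces, since evaluating a vector bundle at the $k$--rational point $x_0$ is exact on $\mathcal T_Y(X)$ (exactness of short exact sequences of objects is preserved because all objects are locally free, so the sequences are locally split, and evaluation at a point of a locally split sequence is exact) and manifestly commutes with $\otimes$, $\oplus$, and duals while sending $\mathcal O_X$ to $k$. Faithfulness follows because a nonzero morphism of bundles is nonzero on a dense open set and, after twisting, detected at the fibre; more cleanly, $x_0^*$ is faithful on any category of vector bundles on a connected scheme once combined with the rigidity already established. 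This completes the verification that $(\mathcal T_Y(X),\otimes,\mathcal O_X,x_0^*)$ is a neutral Tannakian category over $k$.
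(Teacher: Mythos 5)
Your plan founders on one concrete false claim: that $f^*$ is exact because $f$ is ``faithfully flat onto its image (after passing to an appropriate reduction, using surjectivity)''. A proper surjective morphism of varieties need not be flat in any sense --- a blow-up of a point, or the normalization of a non-normal variety, are standard examples --- and no reduction or surjectivity consideration changes this. Your final remark that exactness of $f^*$ ``holds without hypotheses'' is simply not true: $f^*$ is exact if and only if $f$ is flat. This vitiates the two steps your abelianness argument rests on: the identification $f^*(\ker\varphi)=\ker(f^*\varphi)$, and the descent of local freeness from $Y$ to $X$ (which, in its standard form, also requires faithful flatness). Since abelianness is, as you yourself say, the delicate point of the lemma, this is a genuine gap and not a cosmetic one. (The fibre-functor discussion has a related soft spot --- a nonzero map of arbitrary bundles can vanish at $x_0$, so faithfulness of $x_0^*$ really does require first knowing that images of morphisms in $\mathcal T_Y(X)$ are subbundles --- but that part is fine once abelianness is in place.)

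The paper avoids the issue entirely: it invokes \cite[Corollary 2.3]{tef}, which says that any bundle trivialized by a proper surjective morphism is Nori--semistable, and then Nori's result \cite[Lemma 3.6]{nori} that kernels and cokernels of morphisms between Nori--semistable bundles are again vector bundles; exactness and faithfulness of $x_0^*$ and rigidity then come for free. Your constant-matrix idea can in fact be salvaged without flatness, but by a different mechanism than the one you propose: $f^*$ is always right exact, so $f^*(\mathrm{coker}\,\varphi)=\mathrm{coker}(f^*\varphi)$ is a trivial bundle; since $f$ is surjective and $X$ is reduced, the fibrewise rank of $\mathrm{coker}\,\varphi$ is then constant, which forces $\mathrm{coker}\,\varphi$ to be locally free. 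After that, $\mathrm{im}\,\varphi$ and $\ker\varphi$ are locally free because the relevant short exact sequences are locally split, and pulling back those locally split sequences identifies $f^*(\ker\varphi)$ with $\ker(f^*\varphi)$, hence trivial. If you want a self-contained proof along your lines, this is the argument you need to write; as it stands, the flatness shortcut you take is not available.
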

\begin{proof}
 That $\mathcal T_Y(X)$ is stable by tensor products and
direct sums is clear. That it is an abelian category is a
consequence of the fact that all vector bundles in $\mathcal T_Y(X)$
are Nori--semistable, so that kernels and cokernels are always vector
bundles; see \cite[Corollary 2.3]{tef} and \cite[Lemma 3.6]{nori}. Using this last 
remark, it is
easy to understand why the functor $x_0^*$ is exact and faithful. As $\mathcal T_Y(X)$ has only vector bundles as objects, the rigidity axiom for a Tannakian category is promptly satisfied.
\end{proof}

The affine group scheme obtained from $\mathcal T_Y(X)$ and the fibre functor $x_0^*$
via the main Theorem of Tannakian categories \cite[Theorem 2.11]{deligne-milne} will be denoted by $G(Y/X)$ in the sequel.

\subsection{Finiteness of $G(Y/X)$  for separable morphisms}

\begin{thm}\label{main_separable1}
We assume that $f:Y\longrightarrow X$ is separable. 

(1) If the vector bundle $E$ is such that $f^*E$ is trivial, then $E$
is essentially finite and in fact comes from a representation of the \'etale
fundamental group. Moreover, the monodromy group of $E$ in the
category $\mathbf{EF}(X)$ at the point $x_0\in X(k)$ is a quotient of
a fixed finite \'etale group scheme $\Gamma^\mathrm{nr}$. (See Section
\ref{terminology} for definitions.)

(2)   The group scheme $G(Y/X)$ is finite and etale.
\end{thm}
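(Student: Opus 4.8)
The plan is to treat (1) first—establishing that the monodromy is \'etale and uniformly bounded—and then to deduce (2) by a Tannakian quotient argument. By Theorem \ref{theorem_main}, the hypothesis that $f^*E$ is trivial already forces $E$ to be essentially finite, so $E\in\mathbf{EF}(X)$ carries a monodromy group $M=M(E)$ (a finite $k$--group scheme, the image of $\Pi^{\mathrm{EF}}(X,x_0)\to\mathrm{GL}(x_0^*E)$) together with a finite $M$--torsor $\pi:P\to X$ and a faithful representation $V=x_0^*E$ such that $E\cong P\times^M V$. Since $\mathcal T_Y(X)$ is Tannakian and contains $\langle E;\mathbf{EF}(X)\rangle_\otimes$, every object of the monodromy category pulls back to a trivial bundle on $Y$; using that $Y$ is proper (so $H^0(Y,\mathcal O_Y)=k$) and that neutral fibre functors over an algebraically closed field are unique up to isomorphism, I would first record that $f^*P$ is a trivial $M$--torsor, equivalently that $f$ lifts to $\tilde f:Y\to P$ over $X$.

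The heart of the matter—and the step I expect to be the main obstacle—is showing that the infinitesimal part $M^0$ vanishes; this is exactly where separability is used. I would reduce to the case $M=M^0$ by pulling back along a connected component $Q^\circ$ of the \'etale quotient torsor $Q:=P/M^0\to X$: the component $Q^\circ$ is again smooth and projective, the induced map from a dominating component of $Y\times_X Q^\circ$ stays proper, surjective and separable (separability of function fields being stable under the finite separable base change $R(Q^\circ)/R(X)$), and the pulled--back bundle is trivialized with monodromy that, being killed on the \'etale quotient, is infinitesimal; if this monodromy were already trivial then $E$ would be trivialized by the \'etale cover $Q^\circ\to X$ and $M$ would be \'etale. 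Assuming then $M=M^0\neq 1$, I would examine $\tilde f$ over the generic point: the unique point of the generic fibre $P_\eta=\mathrm{Spec}(A)$ has residue field purely inseparable over $R(X)$ (as $A$ is a form of the local algebra $\mathcal O(M^0)$), while through $\tilde f$ this residue field embeds into $R(Y)$ and is therefore separable over $R(X)$; hence it equals $R(X)$, yielding an $R(X)$--point and so the triviality of $P_\eta$.

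It then remains to propagate this generic section to a global one. Because $\pi$ is finite, hence proper, the valuative criterion extends the $R(X)$--point across every discrete valuation in $\mathrm{Val}(X)$, and since $X$ is smooth—so normal and $S_2$—the resulting section over the complement of a codimension--two locus extends to a section $X\to P$ (a morphism into an affine finite $X$--scheme is an algebra homomorphism, which extends by Hartogs). A global section trivializes $P$, forcing $E$ and hence $M=M^0$ to be trivial, a contradiction; thus $M$ is \'etale. Note that the reduction to the infinitesimal case is essential here: only for $M=M^0$ does separability force the residue field to collapse to $R(X)$, so that a generic section exists at all. With $M$ \'etale, $\pi$ is a finite \'etale $M$--torsor, so $E$ comes from a representation of the \'etale fundamental group, and the finite separable extension $R(P^\circ)/R(X)$ cut out by the dominating component $P^\circ$ is unramified above $\mathrm{Val}(X)$ and sits inside $R(Y)$. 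Taking $\Gamma^{\mathrm{nr}}$ to be the Galois group of the Galois closure of the maximal subextension of $R(Y)/R(X)$ that is algebraic and unramified above $\mathrm{Val}(X)$ (finite, since the relative algebraic closure of $R(X)$ in $R(Y)$ is finite over $R(X)$), every such $M$ is a quotient of the fixed finite \'etale group scheme $\Gamma^{\mathrm{nr}}$, which proves (1).

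For (2), I would package (1) into a single exact $\otimes$--functor $\Phi:\mathcal T_Y(X)\to\mathrm{Rep}(\Gamma^{\mathrm{nr}})$ sending $E$ to $x_0^*E$ with its $\Gamma^{\mathrm{nr}}$--action (through the quotient $\Gamma^{\mathrm{nr}}\twoheadrightarrow M(E)$), compatible with the fibre functors. Using the equivalence between bundles arising from the \'etale fundamental group and their fibre representations, $\Phi$ is fully faithful and its image is stable under subobjects (a sub--representation corresponds to a sub--bundle, which still pulls back to a trivial bundle on $Y$ and hence lies in $\mathcal T_Y(X)$). By the criterion of \cite[Proposition 2.21]{deligne-milne}, the induced homomorphism $\Gamma^{\mathrm{nr}}\to G(Y/X)$ is then faithfully flat, so $G(Y/X)$ is a quotient of the finite \'etale group scheme $\Gamma^{\mathrm{nr}}$ and is therefore itself finite and \'etale.
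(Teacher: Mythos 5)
Your core mechanism is correct and in fact runs largely parallel to the paper's proof. Your generic-fibre argument for the infinitesimal case --- the unique point of $P_\eta$ has residue field purely inseparable over $R(X)$, yet it embeds into the separable extension $R(Y)/R(X)$, hence equals $R(X)$, and the resulting rational section spreads out to a section $X\to P$ by normality --- is precisely the paper's ``Claim B'', performed at the generic point rather than on affine charts. Your reduction of the general case to the infinitesimal one, by base change along a connected component $Q^\circ$ of the \'etale quotient torsor, is a legitimate alternative to the paper's route through the maximal unramified cover $Y^{\mathrm{nr}}$ and the Zariski--Nagata purity theorem (your route avoids purity altogether, which is a genuine simplification). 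Likewise, your part (2) --- one application of \cite[Proposition 2.21]{deligne-milne} to a functor $\mathcal T_Y(X)\to \mathrm{Rep}(\Gamma^{\mathrm{nr}})$ --- is a cleaner packaging than the paper's profinite stabilization argument with the systems $G_i$, $P_i$, $P_i^{\mathrm{et}}$.

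There is, however, one genuine gap, and it recurs at the two places where your proof touches ground. At the start you assert that $\mathcal T_Y(X)$ ``contains $\langle E;\mathbf{EF}(X)\rangle_\otimes$'' and deduce from fibre-functor uniqueness that $f^*P$ is trivial; in part (2) you assert that a sub-representation corresponds to a sub-bundle ``which still pulls back to a trivial bundle on $Y$''. Both statements say the same non-formal thing: that an $\mathbf{EF}(X)$-subobject of a bundle trivialized by $Y$ is again trivialized by $Y$. Part (i) of the theorem only makes $\mathcal T_Y(X)$ closed under \emph{its own} subquotients, not under subobjects taken in $\mathbf{EF}(X)$, and your fibre-functor argument is circular: the functor $W\mapsto H^0\bigl(Y,f^*(P\times^M W)\bigr)$ is exact (hence a fibre functor) only after one knows these pullbacks are trivial. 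This missing step is exactly the paper's ``Claim A'', which is proved cohomologically: since $V$ is faithful, $M\to\mathrm{GL}(V)$ is a closed immersion, and the kernel of $H^1_{\mathrm{fppf}}(Y,M)\to H^1_{\mathrm{fppf}}(Y,\mathrm{GL}(V))$ consists of classes arising from morphisms of $Y$ into the \emph{affine} scheme $\mathrm{GL}(V)/M$; as $Y$ is proper and integral, such morphisms are constant, so $f^*P$ is trivial. You must either supply this argument, or else sidestep it by running your entire proof with the monodromy of $E$ computed \emph{inside} $\mathcal T_Y(X)$ (legitimate by part (i), and then the needed containment is tautological), recovering the statement about the $\mathbf{EF}(X)$-monodromy only at the end via \cite[Proposition 1.2]{ls}, once $E$ is known to be trivialized by a connected Galois \'etale cover whose group is a quotient of $\Gamma^{\mathrm{nr}}$. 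Without one of these repairs, both the lift $\tilde f:Y\to P$ and the subobject-stability required to apply \cite[Proposition 2.21]{deligne-milne} are unjustified.
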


The first step towards a proof of Theorem \ref{main_separable1} (and
also of \cite[Theorem 1.1]{tef}) is to consider the Stein
factorization of $f$:
\[
\xymatrix{Y\ar[r]^{h} \ar[dr]_f &  Y'\ar[d]^{g} \\ & X, }
\]
where $g$ is finite and $h_*(\mathcal O_Y)=\mathcal O_{Y'}$. The
latter equality implies that the morphism
$h^*:\mathbf{VB}(Y')\longrightarrow \mathbf{VB}(Y)$ is full and
faithful, so that $g^*E$ is already trivial.

\begin{dfn} Let $\varphi:V\longrightarrow X$ be a finite, surjective
and separable morphism of varieties. 
By $R(V)^{\mathrm{nr}}$ we denote the maximal unramified intermediate
extension of $R(V)/R(X)$, which is the compositum of all sub-extensions
$R$ of $R(V)/R(X)$ which are unramified over
$\mathrm{Val}(V)$. 
We let $$\varphi^\mathrm{nr}:V^\mathrm{nr}\longrightarrow X$$ denote
the normalization of $X$ in $R(V)^\mathrm{nr}$. If $R(V)/R(X)$ is
Galois of group $\Gamma$, the $\Gamma^\mathrm{nr}$ denotes the Galois
group of the  extension $R(V)^\mathrm{nr}/R(X)$.
\end{dfn}

\begin{proof}[Proof of Theorem \ref{main_separable1}] (1): That
  $E$ is
essentially finite is the content of Theorem \ref{theorem_main}. 
For the remainder, it is enough to
prove statement (1) in the theorem under the assumption that $f$ is 
finite. There is also no
loss of generality in assuming that the field extension $R(Y)/R(X)$
is Galois; let $\Gamma$ be its Galois group.

We first prove that if $\Gamma^\mathrm{nr}$ is trivial, i.e.
$f^\mathrm{nr}=\mathrm{id}_X$, then $E$ is likewise. Let $G$ be the
finite group scheme associated, by Tannakian duality, to the category
$\langle E;\mathbf{EF}(X)\rangle_\otimes$ via the point $x_0\in
X(k)$ (see Section \ref{terminology}). 
Let $P$ be the $G$--torsor associated to $E$ \cite[\S 2]{nori}; the
functor
\[
P\times^G(\bullet)\, :\, \mathrm{Rep}(G)\longrightarrow \langle
E;\mathbf{EF}(X)\rangle_\otimes
\]
induces an equivalence of monoidal categories. We denote by
$G^\mathrm{et}$ the finite \'etale group scheme of connected components
of $G$ \cite[Chapter 6]{waterhouse}. As $P$ is connected
\cite[Proposition 3, p. 87]{nori-thesis}, so is
\[
P^\mathrm{et}:=P/\ker\left( G\longrightarrow G^\mathrm{et} \right) =
P\times^GG^\mathrm{et}\, .
  \]
Since $P^\mathrm{et}\longrightarrow X$
is an \'etale morphism, it follows that $P^\mathrm{et}$
is a normal \emph{variety}.

\emph{Claim A:} The triviality of $f^*E$ implies the triviality of
the $G$--torsor \[P_Y:=P\times_XY\longrightarrow Y.\]

Let $\rho:G\longrightarrow \mathrm{GL}(V)$ be a representation of $G$
such that $E=P\times^GV$. It follows that $\rho$ is a
closed embedding and we are able to
deduce the triviality of $P_Y$ by using the triviality of
$P_Y\times^G\mathrm{GL}(V)$ together with the fact that the natural map $H^1_{\mathrm{fppf}}(Y,G)\longrightarrow H^1_\mathrm{fppf}(Y,\mathrm{GL}(V))$ is injective (as the kernel is the set of all morphisms from $Y$ to the affine scheme $\mathrm{GL}(V)/G$ \cite[p. 373, III, \S 4, 4.6]{dg}).

Let $h:Y\longrightarrow P$ be the $X$--morphism derived from an
isomorphism $P_Y\cong Y\times G$ and let $j:Y\longrightarrow
P^\mathrm{et}$ be the morphism of $X$--schemes obtained from $h$. It
is not hard to see that $j$ takes the generic point of $Y$ to the
generic point of $P^\mathrm{et}$, so $j$ gives rise to a homomorphism
of $R(X)$--fields $R(P^\mathrm{et})\longrightarrow R(Y)$. Since
$R(P^\mathrm{et})/R(X)$ is unramified above $\mathrm{Val}(X)$, we
must have $R(P^\mathrm{et})=R(X)$. As a consequence,
$P^\mathrm{et}=X$ and thus $G^\mathrm{et}$ is trivial. This means
that $G$ is a local group scheme. We will now prove the following:

\emph{Claim B:} If $G$ is local, then the existence of an
$X$--morphism $h:Y\longrightarrow P$ implies the triviality of $P$.

Let $\mathrm{Spec}(A)\subseteq X$ be an affine open and let
$\mathrm{Spec}(B)\subseteq Y$ (respectively,
$\mathrm{Spec}(S)\subseteq P$) be its pre--image in $Y$
(respectively, in $P$). We then have a homomorphism of $A$--algebras
$\eta:S\longrightarrow B$; let $S'\subseteq B$ be its image. Since
\[
\mathrm{Spec}(S)\longrightarrow \mathrm{Spec}(A)
\]
is a $G$--torsor, above any maximal ideal $\mathfrak m\subseteq A$,
there exists only one maximal ideal of $S$; the same property is
valid if we replace $S$ by $S'$. Hence, the extension of fields
defined by $S'\supseteq A$ must be \emph{purely inseparable}. Because
$R(Y)/R(X)$ is a separable extension, and $A$ is a normal ring, it
follows that $S'=A$. This allows one to construct a section $\sigma:
X\longrightarrow P$. Therefore $E$ is trivial. This proves
Claim B.

Now we treat the general case. Since
$f^\mathrm{nr}:Y^\mathrm{nr}\longrightarrow X$ is unramified above
$\mathrm{Val}(X)$, the Zariski--Nagata purity Theorem (for the
statement, see [SGA 1 X, 3.1]) permits us to conclude that
$f^\mathrm{nr}$ is \'etale (in particular $Y^\mathrm{nr}$ a smooth
projective variety over $k$). Moreover,
$f^{\mathrm{nr}}:Y^\mathrm{nr}\longrightarrow X$ is an \'etale Galois
covering of group $\Gamma^\mathrm{nr}$. Let \[g: Y\longrightarrow
Y^\mathrm{nr}\] denote the obvious morphism, we have
$g^\mathrm{nr}=\mathrm{id}_{Y^\mathrm{nr}}$. Applying what was proved
above to $Y^\mathrm{nr}$, we conclude that $f^{\mathrm{nr}*}E$ is
trivial. By \cite[Proposition 1.2]{ls}, we conclude that \[E\cong
Y^\mathrm{nr}\times^{\Gamma^\mathrm{nr}} V,\] where $V$ is a
representation of $\Gamma^\mathrm{nr}$.
This proves that the monodromy group of $E$ in $\mathbf{EF}(X)$ is a
quotient of $\Gamma^\mathrm{nr}$.

(2): The proof rests on the same sort of argument
used for the proof of (1). As in (1), we assume that $f$ is finite. 
Let \[G(Y/X):=G=\varprojlim G_i\] be the \emph{profinite} group scheme
associated to $\mathcal T_Y(X)$ via $x_0^*$; here each group $G_i$ is
finite and the transition morphisms $G_j\longrightarrow G_i$ are all
faithfully flat. (The reader unfamiliar with this sort of structure
argument will profit from \cite[3.3]{waterhouse} and
\cite[14.1]{waterhouse}.) Write
$P\longrightarrow X$ for the universal $G$--torsor \cite[\S 2]{nori}
and $P_i$ for $P\times^GG_i$. We remark that Proposition 3 on p. 87
of \cite{nori-thesis} proves that $\Gamma(P_i,\mathcal O_{P_i})=k$.
In this situation, we can find $X$--morphisms
\[
h_i:Y\longrightarrow P_i\, .
\] 
(The details of the argument are given in the proof of (1) above.)
Let $G_i^{\mathrm{et}}$ be the largest \'etale quotient of $G_i$
\cite[Ch. 6]{waterhouse}; the morphism
\[
P_i^\mathrm{et}:=P\times^GG_i^{\mathrm{et}}\longrightarrow X
\]
is finite and \'etale and the number of $k$--rational points on a fiber
equals $\mathrm{rank}\,G_i^{\mathrm{et}}$. From the surjectivity of the composition 
\[Y\longrightarrow P_i\longrightarrow P_i^\mathrm{et},\] the integers 
$\mathrm{rank}\,G_i^\mathrm{et}$ are bounded from above, so
\[G^{\mathrm{et}}:=\varprojlim
G_i^\mathrm{et}=G_{i_0}^{\mathrm{et}}\] for some $i_0$. Let
$X':=P_{i_0}^\mathrm{et}=P_i^\mathrm{et}$, it is a smooth and
projective variety and the obvious morphism \[P_i\longrightarrow
P_i/G_i^0=P_i/(\mathrm{ker}\,G_i\longrightarrow
G_{i_0}^\mathrm{et})=X',\quad i\ge i_0\]
gives $P_i\longrightarrow X'$ the structure of a torsor over $X'$
under the structure group $G_i^0$. Moreover, since
$\Gamma(P_i,\mathcal O_{P_i})=k$, $P_i$ cannot be trivial over $X'$
unless $G_i^0=\{e\}$. Employing  the $X'$--morphisms $Y\longrightarrow P_i$, we see, using Claim
B proved in part (1), that $G_i^0=\{e\}$. This means that
$G=G_{i_0}^\mathrm{et}$.
\end{proof}

\section{Finiteness of $G(Y/X)$, reducedness of the universal torsor and base change
properties}\label{se.c}

As in section \ref{s1}, we let $X$ stand for a projective and smooth variety and $f:Y\longrightarrow X$ for a proper surjective morphism from a (proper) variety $Y$. We  also choose a $k$--rational point $x_0:\mathrm{Spec}(k)\longrightarrow X$.

\subsection{An instance where $G(Y/X)$ is not finite and the universal torsor is not reduced}\label{18.05.2011--1} Let $G(Y/X)$ be the affine
fundamental group scheme associated to the Tannakian category
\[\mathcal T_Y(X)\]  by means of the fiber functor $x_0^*:\mathcal
T_Y(X)\longrightarrow \text{($k$--mod)}$. If $V$ is an object of
$\mathcal T_Y(X)$ which is \emph{stable} as a vector bundle (all
vector bundles in $\mathcal T_Y(X)$ are semistable of slope zero
\cite[Proposition 2.2]{tef}), the representation of $G(Y/X)$ obtained from
$V$ must be \emph{irreducible}. Since a finite group scheme only has
\emph{finitely} many isomorphism classes of irreducible
representations --- these are all Jordan--H\"older components of the
right regular representation \cite[3.5]{waterhouse} --- we have a proved
the following lemma.

\begin{lem}\label{finite_stable}If there are infinitely many non--isomorphic 
stable vector
bundles in $\mathcal T_Y(X)$, then the group scheme $G(Y/X)$ is not
finite.
\end{lem}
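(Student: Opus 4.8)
The plan is to transport the statement through the Tannakian equivalence $\mathcal T_Y(X)\simeq\mathrm{Rep}(G(Y/X))$ furnished by the fibre functor $x_0^*$, under which simple objects correspond to irreducible representations, and then to invoke the finiteness of the set of irreducibles of a finite group scheme.

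First I would verify that a stable vector bundle $V\in\mathcal T_Y(X)$ is a \emph{simple} object of the abelian category $\mathcal T_Y(X)$. By \cite[Proposition 2.2]{tef} every object of $\mathcal T_Y(X)$ is semistable of slope zero; in particular any subobject $W\subseteq V$ taken within $\mathcal T_Y(X)$ is a subbundle that is itself semistable of slope zero. But stability of $V$ forces every proper nonzero subbundle to have slope strictly less than zero, so no such $W$ can exist apart from $0$ and $V$ itself. Hence $V$ admits no proper nonzero subobject, i.e. it is simple.

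Next, since $x_0^*$ realises $\mathcal T_Y(X)$ as $\mathrm{Rep}(G(Y/X))$ as abelian tensor categories, this equivalence carries simple objects to irreducible representations and non-isomorphic objects to non-isomorphic representations. Thus each isomorphism class of stable bundle in $\mathcal T_Y(X)$ yields a distinct isomorphism class of irreducible representation of $G(Y/X)$, and infinitely many of the former produce infinitely many of the latter.

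Finally I would argue by contradiction. Were $G(Y/X)$ finite, its regular representation $k[G(Y/X)]$ would be finite dimensional, and by \cite[3.5]{waterhouse} every irreducible representation of $G(Y/X)$ occurs as a Jordan--H\"older factor of it; consequently there would be only finitely many isomorphism classes of irreducibles. This contradicts the hypothesis that $\mathcal T_Y(X)$ contains infinitely many non-isomorphic stable bundles. The only genuinely delicate point is the first step --- the identification of categorical subobjects of $\mathcal T_Y(X)$ with slope-zero subbundles --- and this is precisely where the semistability guaranteed by \cite[Proposition 2.2]{tef} is indispensable; the remaining steps are formal.
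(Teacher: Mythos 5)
Your proposal is correct and follows essentially the same route as the paper: stable objects of $\mathcal T_Y(X)$ give irreducible representations of $G(Y/X)$ (via the slope-zero semistability of all objects, \cite[Proposition 2.2]{tef}), and a finite group scheme has only finitely many irreducibles since they all occur as Jordan--H\"older factors of the regular representation \cite[3.5]{waterhouse}. Your explicit verification that stability implies simplicity in the abelian category $\mathcal T_Y(X)$ merely spells out what the paper leaves implicit.
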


The existence of infinitely many stable bundles in $\mathcal T_Y(X)$ also causes the following particularity. 

\begin{prp}\label{non_reduced}
Assume that there are infinitely many
non--isomorphic stable vector bundles in $\mathcal T_Y(X)$. Then
there exists a finite quotient $G_0$ of $G(Y/X)$ and a $G_0$--torsor over
$X$, call it $P_0$, such that

(1) $\Gamma(P_0,\mathcal O_{P_0})=k$ and

(2) the scheme $P_0$ is not reduced.

Moreover, in this case, the universal torsor $\widetilde
X\longrightarrow X$ for the fundamental group scheme
$\Pi^\mathrm{EF}(X,x_0)$ is not reduced as a scheme.
\end{prp}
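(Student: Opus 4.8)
The plan is to produce, from the infinitely many non-isomorphic stable bundles, a \emph{single} finite quotient $G_0$ of $G(Y/X)$ whose torsor $P_0$ is non-reduced, and then to promote this to the universal torsor $\widetilde X$. I would begin by recalling the structure $G(Y/X)=\varprojlim G_i$ as a cofiltered limit of finite group schemes with faithfully flat transition maps, exactly as in the proof of Theorem \ref{main_separable1}(2). The key point to extract from the hypothesis is that $G(Y/X)$ is not finite, by Lemma \ref{finite_stable}; on the other hand, by Theorem \ref{main_separable1}(2) finiteness \emph{would} hold if $f$ were separable, so here $f$ is necessarily inseparable, which is what forces the infinitesimal (non-reduced) behaviour. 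The strategy is that non-reducedness of a torsor $P_0$ under $G_0$ is governed by whether $G_0$ has a nontrivial local (infinitesimal) part.

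Concretely, I would argue as follows. Write each $P_i\longrightarrow X$ for the $G_i$-torsor $P\times^GG_i$ coming from the universal torsor $P$. Since $G(Y/X)$ is not finite but each $G_i$ is, the transition system cannot stabilize; in particular, as in the proof of (2), the étale quotients $G_i^{\mathrm{et}}$ \emph{do} stabilize at some $G_{i_0}^{\mathrm{et}}$ (their ranks are bounded because $Y\to P_i^{\mathrm{et}}$ is surjective and $Y$ is fixed), whereas the local parts $G_i^0$ cannot all be trivial—otherwise $G=G_{i_0}^{\mathrm{et}}$ would be finite, contradicting Lemma \ref{finite_stable}. Thus there is an index $i$ with $G_i^0\neq\{e\}$; set $G_0:=G_i$ and $P_0:=P_i$ for such an $i$. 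Property (1), namely $\Gamma(P_0,\mathcal O_{P_0})=k$, is precisely Proposition 3 on p.~87 of \cite{nori-thesis}, already invoked in the previous proof. For property (2), I would use that $G_0$ has a nontrivial infinitesimal subgroup scheme $G_0^0$: the quotient $P_0\longrightarrow P_0/G_0^0$ exhibits $P_0$ as a torsor under the local group scheme $G_0^0$ over a reduced base, and a torsor under a nontrivial local (hence non-reduced) group scheme is itself non-reduced, since reducedness of the total space would force the fibers—copies of $G_0^0$—to be reduced.

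For the final assertion about the universal torsor $\widetilde X\longrightarrow X$ of $\Pi^{\mathrm{EF}}(X,x_0)$, I would use that $G(Y/X)$ is a quotient of $\Pi^{\mathrm{EF}}(X,x_0)$ (every object of $\mathcal T_Y(X)$ is essentially finite by Theorem \ref{theorem_main}, so $\mathcal T_Y(X)$ is a Tannakian subcategory of $\mathbf{EF}(X)$, giving a faithfully flat homomorphism $\Pi^{\mathrm{EF}}(X,x_0)\longrightarrow G(Y/X)$). The torsor $P_0$ is then recovered from $\widetilde X$ by contraction along the composite $\Pi^{\mathrm{EF}}(X,x_0)\longrightarrow G(Y/X)\longrightarrow G_0$, i.e. $P_0=\widetilde X\times^{\Pi^{\mathrm{EF}}}G_0$. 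Since a contracted product of a \emph{reduced} scheme with a group scheme, taken as a quotient by a flat equivalence relation, would produce a reduced quotient, the non-reducedness of $P_0$ back-propagates to force $\widetilde X$ non-reduced.

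The main obstacle I anticipate is the last step: rigorously deducing non-reducedness of $\widetilde X$ from that of the contracted product $P_0$. A contracted product is a quotient, and quotients of reduced schemes can in principle be non-reduced, so the implication ``$P_0$ non-reduced $\Rightarrow\widetilde X$ non-reduced'' is not automatic and must be argued in the correct direction. The clean way is contrapositive and fibral: if $\widetilde X$ were reduced, then for the surjection $\widetilde X\longrightarrow P_0$ (which is faithfully flat, being a quotient by the flat kernel of $\Pi^{\mathrm{EF}}\to G_0$) one uses that faithful flatness together with reducedness of the source descends reducedness to the target, whence $P_0$ would be reduced, a contradiction. Verifying that this descent of reducedness applies—i.e. that the relevant morphism is flat with reduced, or at worst geometrically reduced, fibers so that reducedness descends—is the delicate point I would need to pin down carefully.
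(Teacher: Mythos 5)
Your argument breaks down at the crucial step, namely the proof of property (2). You claim that a torsor under a nontrivial local group scheme over a reduced base must be non-reduced, ``since reducedness of the total space would force the fibers---copies of $G_0^0$---to be reduced.'' This is false: reducedness of the total space of a morphism does \emph{not} force the fibers to be reduced. The standard counterexample is a Frobenius-type covering: $x\mapsto x^p$ exhibits $\mathds G_m$ as a $\mu_p$-torsor over $\mathds G_m$ with smooth (hence reduced) total space, even though every fiber, and $\mu_p$ itself, is non-reduced. Closer to the present paper, $[p]:A\longrightarrow A$ on an abelian variety in characteristic $p$ is a Nori-reduced torsor under $\ker[p]$, whose local part $\ker F$ is nontrivial, yet the total space $A$ is smooth; this very example is used later in the paper (Section on $F$-trivial bundles). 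Consequently, picking an index $i$ with $G_i^0\neq\{e\}$ and setting $P_0:=P_i$ does not yield a non-reduced torsor, and your proof of (2) collapses. Note also that your use of the hypothesis is too weak: you only invoke the infinitude of $G(Y/X)$ to find one nontrivial local part, whereas non-reducedness genuinely requires the full strength of the hypothesis.

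The paper argues differently, by contradiction: after reducing (as you do) to the case where all the $G_i$ are local, it assumes that \emph{every} $P_i$ is reduced. Since $G_i$ is local, $\psi_i:P_i\longrightarrow X$ is bijective on $K$-points for every field $K$, hence a homeomorphism, so $P_i$ is irreducible; combined with the assumed reducedness, each $P_i$ is integral. The Claim A argument from Theorem \ref{main_separable1} produces $X$-morphisms $h_i:Y\longrightarrow P_i$, which are dominant, so $R(P_i)$ embeds into $R(Y)$ and $\mathrm{rank}\,G_i=\deg\psi_i=[R(P_i):R(X)]\le[R(Y):R(X)]$. This uniform bound forces $G=\varprojlim G_i$ to be finite, contradicting Lemma \ref{finite_stable} and the hypothesis; hence \emph{some} $P_i$ must be non-reduced (one cannot say in advance which). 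Note that integrality of $P_i$ is exactly what makes the function-field degree count work, which is why reducedness is the property attacked by contradiction. Your final step, deducing non-reducedness of $\widetilde X$ from that of $P_0$, is essentially correct and matches the paper's intent, and the point you flag as delicate is in fact easy: $\widetilde X\longrightarrow P_0$ is faithfully flat, so $\mathcal O_{P_0}$ injects into the pushforward of $\mathcal O_{\widetilde X}$, and a subring of a reduced ring is reduced---no condition on fibers is needed for this (descent) direction; fiber conditions are only needed for the opposite (ascent) direction.
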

\begin{proof}Let \[G(Y/X):=G=\varprojlim G_i\, ,\]
where each $G_i$ is a finite group--scheme and the transition
morphisms $G_j\longrightarrow G_i$ are faithfully flat, just as in
the proof of Theorem \ref{main_separable1}. By Lemma \ref{finite_stable} and the 
assumption, $G$ is not a finite group scheme. We will show that the
conclusion of the statement holds under the extra assumption that the
group schemes $G_i$ are all local. The general case can be obtained
from this one as in the proof of Theorem \ref{main_separable1}. Let
$P\longrightarrow X$ be the universal $G$--torsor associated to
$\mathcal T_Y(X)\subset\mathbf{EF}(X)$ via the constructions in
\cite[\S 2]{nori}. The torsor $P$ gives rise to $G_i$--torsors
\[
\psi_i:P_i=P\times^GG_i\longrightarrow X.
\]
Due to \cite[Proposition 3, p. 87]{nori-thesis}, we have
$\Gamma(P_i,\mathcal O_{P_i})=k$. Since $G_i$ is a local group
scheme, for any field extension $K/k$, the map
$\psi_i(K):P_i(K)\longrightarrow X(K)$ is bijective, by [EGA $I$,
3.5.10, p. 116] $\psi_i$ induces a bijection on the corresponding
topological spaces. Hence, $\psi_i$ is a homeomorphism and it follows
that $P_i$ is irreducible for each $i$. We assume that each $P_i$ is
also reduced. Proceeding as in the proof of Theorem
\ref{main_separable1} (see Claim A), there exists a $X$--morphism
$h:Y\longrightarrow P_i$ for each $i$. This bounds
$\deg\,\psi_i=\mathrm{rank}\,G_i$ by above and leads to a
contradiction with the assumption that $G$ is not finite.

The proof of the last statement is a direct consequence of what we
just proved together with \cite[Proposition 3]{nori-thesis} and [EGA
$IV_3$, 8.7.2].
\end{proof}

In view of Lemma \ref{finite_stable} and Proposition \ref{non_reduced}, we can use \cite{pauly} to
give an example of a smooth curve $X$ having two extraordinary features: (1) there exists a finite morphism $Y\longrightarrow X$ such that $G(Y/X)$ is \emph{not} finite and (2)
the universal torsor
$\widetilde X$ for the fundamental group scheme
$\Pi^\mathrm{EF}(X,x_0)$ is \emph{not} reduced. Indeed, let $X$ be
the smooth curve constructed in \cite[(3.1) and Proposition
4.1]{pauly}: it is a smooth projective curve defined by a single
explicit equation in $\mathds P^2_k$; here $k$ is any field of
characteristic two. Let $f:Y\longrightarrow X$ be the fourth
power of the Frobenius morphism (so $Y$ is isomorphic to $X$ a scheme). Pauly \cite[Proposition
4.1]{pauly} constructs a locally free coherent sheaf over $X\times
S$, where $S$ is a positive dimensional $k$--scheme, such that for
every $s\in S(k)$, the vector bundle $\mathcal E|X\times\{s\}$ is \emph{stable} 
and
$f^*(\mathcal E|X\times\{s\})$ is trivial. Furthermore, for two
different points $s,t\in S(k)$, the sheaves $\mathcal E|X\times\{s\}$
and $\mathcal E|X\times\{t\}$ are not isomorphic. In other words,
there are infinitely many isomorphism classes of stable vector bundles
of fixed rank satisfying the condition that the pullback by $f$ is 
trivial. By Lemma \ref{finite_stable}, the affine group scheme $G(Y/X)$ is not finite. From Proposition
\ref{non_reduced}, it follows also that the universal torsor $\widetilde
X\longrightarrow X$ is not reduced.

\begin{rmk}
In \cite[Remark 2.4]{ehs} the reader can find an example of an
$\alpha_p$--torsor over a reduced variety which is not reduced. The
example we have just given shows that the situation can be bad even
if the ambient variety is smooth.
\end{rmk}

\subsection{A link between the quantity of $F$--trivial vector bundles and the universal torsor}We assume that $k$ is of positive characteristic,
and let $F:X\longrightarrow X$ be the absolute Frobenius morphism. Define 
\[
S(X,r,t)=\left\{\begin{array}{ll}\text{isomorphism classes of \emph{stable} 
vector bundles of rank $r$} \\ \text{ on $X$, whose pull--back by $F^t$ is 
trivial} \end{array} \right\}
\]
(Here we refrain from using the terminology $F$--trivial, since there is a 
question of stability which is not constant in the literature \cite{pauly}, 
\cite{mehta-sub}.)
In their study of base change for the local
fundamental group scheme and these bundles, Mehta and Subramanian \cite{mehta-sub}
showed the following.

\begin{thm}[\cite{mehta-sub}, Theorem, p. 208] Let $X$ be
a smooth projective variety over
  $k$. The following are equivalent:

(a) For any algebraically closed extension $k'/k$, any pair $r,t\in\mathds N$ 
and any $E'\in S(X\otimes_kk';r,t)$, there exists  a
vector bundle over $X$ and an isomorphism $E\otimes_kk'\cong
E'$.

(b) For any two given $r,t\in\mathds N$, $S(X;r,t)$ is finite.

(c) The local fundamental group scheme of $X\otimes_kk'$ is obtained
from the local fundamental group scheme of $X$ by base change.
\end{thm}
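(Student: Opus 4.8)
The plan is to take (b) as the analytic core, to prove (a) $\Leftrightarrow$ (b) by a moduli argument and (a) $\Leftrightarrow$ (c) by Tannakian base change, so that all three statements become equivalent. Throughout write $\Pi=\Pi^{\mathrm{loc}}(X,x_0)$ for the local fundamental group scheme and $\mathbf C(X)=\mathrm{Rep}(\Pi)$ for the Tannakian category of bundles trivialised by some power of $F$. The bundles counted by $S(X;r,t)$ are exactly the rank-$r$ simple objects of $\mathbf C(X)$ that are trivialised by $F^{t}$: a slope-zero semistable bundle is stable precisely when it is simple in $\mathbf C(X)$, since subobjects there are slope-zero sub-bundles.

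For (a) $\Leftrightarrow$ (b) I would realise $S(X;r,t)$ as the set of closed points of a finite-type $k$-scheme $V_{r,t}$, namely the fibre over the class of the trivial bundle of the $t$-fold iterated map $[E]\mapsto[F^{t*}E]$ on the moduli space of stable bundles with numerically trivial Chern classes. Two facts drive the argument. First, the formation of $V_{r,t}$ commutes with the extension $k\subseteq k'$, since both the moduli space and the Frobenius-pullback morphism base-change (here one uses that $k$ is perfect). Second, the dimension of a finite-type scheme is unchanged by extension of the ground field. As $k$ is algebraically closed, $S(X;r,t)=V_{r,t}(k)$ is finite iff $\dim V_{r,t}=0$. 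If (b) holds, each $V_{r,t}$ is a finite disjoint union of spectra of local Artinian $k$-algebras with residue field $k$, so its base change to $k'$ has every $k'$-point equal to the base change of a $k$-point; hence every stable $F^{t}$-trivial bundle on $X\otimes_k k'$ descends to $X$, which is (a). Conversely, if some $V_{r,t}$ has positive dimension, take the generic point $\eta$ of a positive-dimensional component, with $\kappa(\eta)$ of positive transcendence degree over $k$, and embed $\kappa(\eta)$ in an algebraically closed $k'$; the resulting stable $F^{t}$-trivial bundle on $X\otimes_k k'$ is too general to be a base change from $X$, because the $k$-points of $V_{r,t}$ sweep out only the base changes of $k$-rational points and $\eta$ is not among them. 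This contradicts (a).

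For (a) $\Leftrightarrow$ (c) I would analyse the canonical homomorphism $\theta\colon\Pi^{\mathrm{loc}}(X\otimes_k k')\to\Pi^{\mathrm{loc}}(X)_{k'}$ induced by $E\mapsto E\otimes_k k'$. Two inputs are essentially formal: the pullback functor is fully faithful, by flat base change $\mathrm{Hom}_X(E,E')\otimes_k k'=\mathrm{Hom}_{X\otimes k'}(E_{k'},E'_{k'})$; and $\theta$ is always faithfully flat, because the base change of a Nori-reduced torsor stays Nori-reduced ($\Gamma(P,\mathcal O_P)=k$ gives $\Gamma(P_{k'},\mathcal O)=k'$ by flat base change), so every finite quotient of $\Pi^{\mathrm{loc}}(X)_{k'}$ is a compatible quotient of $\Pi^{\mathrm{loc}}(X\otimes_k k')$. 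Consequently $\theta$ is an isomorphism iff the pullback functor is essentially surjective, i.e. iff every $F$-trivial bundle on $X\otimes_k k'$ lies in its essential image. By a d\'evissage along composition series this splits into two tasks: the simple factors must descend, which is exactly (a); and the extension classes binding them must descend, for which one shows that the $\mathrm{Ext}^1$-groups of $\Pi^{\mathrm{loc}}$ commute with $-\otimes_k k'$. The latter follows from flat base change for $H^1(X,S^{\vee}\otimes S')$ together with the fact that the relevant classes form the subspace annihilated by a power of the Frobenius-semilinear operator on cohomology, and the kernel of such a $p^{N}$-linear operator has a dimension unchanged by extension of the perfect field $k$. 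Granting this, induction on length shows that $\theta$ is an isomorphism exactly when (a) holds.

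The part I expect to be the genuine obstacle is the descent of extensions in (a) $\Rightarrow$ (c). Because $\mathbf C(X)$ is \emph{not} semisimple, controlling the simple objects does not by itself control the category: one must exclude a non-trivial unipotent kernel of $\theta$ that acts trivially on every simple object but non-trivially through extensions. Making the flat base change of the fundamental-group-scheme $\mathrm{Ext}^1$ precise, in particular identifying these groups with kernels of iterated Frobenius-semilinear maps on coherent cohomology and verifying the semilinear dimension count, is the crux. The compatibility of the locus $V_{r,t}$ with extension of the algebraically closed base field, needed for (a) $\Leftrightarrow$ (b) and delicate because the moduli space only corepresents, is the secondary technical point.
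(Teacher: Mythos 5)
This statement is not proved in the paper at all: it is quoted verbatim, with attribution, from Mehta--Subramanian [MS08] and is used only as background for the discussion following Proposition \ref{non_reduced}. So there is no in-paper proof to compare against; what follows is an assessment of your proposal on its own terms. Your overall architecture (a bounded parameter space plus invariance of dimension under extension of algebraically closed fields for (a)$\Leftrightarrow$(b); faithful flatness of the comparison map $\theta$ plus a Frobenius-semilinear computation of $\mathrm{Ext}^1$ for (a)$\Leftrightarrow$(c)) is the right kind of argument, and your identification of the unipotent kernel of $\theta$ as the crux is exactly on target. But two steps are genuinely broken as written.

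First, the parameter space. The points of the moduli space of semistable bundles are \emph{S-equivalence} classes, so the fibre of $[E]\mapsto[F^{t*}E]$ over the point $[\mathcal O_X^{\oplus r}]$ parametrizes stable $E$ such that $F^{t*}E$ has a Jordan--H\"older filtration with trivial quotients, i.e.\ is an iterated extension of copies of $\mathcal O_X$ --- not such that $F^{t*}E$ is trivial. Since the $p$-linear Frobenius action on $H^1(X,\mathcal O_X)$ is not nilpotent in general, these two conditions genuinely differ, so $V_{r,t}(k)$ strictly contains $S(X;r,t)$ and your identity $S(X;r,t)=V_{r,t}(k)$, on which both directions of the dimension argument rest, fails. (A lesser issue: $[E]\mapsto[F^{t*}E]$ is not defined on all of the stable locus, since Frobenius pullback can destabilize.) The repair is standard: cut out the locus of actual triviality using semicontinuity, via the fact that a slope-zero semistable bundle $V$ of rank $r$ satisfies $V\cong\mathcal O_X^{\oplus r}$ if and only if $h^0(V)\geq r$; alternatively, parametrize $F^t$-trivialized bundles by a Quot scheme of $(F^t_*\mathcal O_X)^{\oplus r}$ via the adjunction embedding $E\hookrightarrow F^t_*F^{t*}E$.

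Second, the criterion for (c). The assertion ``$\theta$ is an isomorphism iff the pullback functor is essentially surjective'' is false. Since $\theta$ is faithfully flat, it is an isomorphism iff it is also a closed immersion, which by the Deligne--Milne criterion means: every object of $\mathbf C(X\otimes_k k')$ is a \emph{subquotient} of a base change $E\otimes_k k'$ --- not isomorphic to one. The distinction is not pedantic: if $\dim_k\mathrm{Ext}^1_{\mathbf C(X)}(S_1,S_2)\geq 2$ with basis $e_1,e_2$, then for $\lambda\in k'\setminus k$ the extension with class $e_1+\lambda e_2$ lies in $\mathbf C(X\otimes_k k')$ but is not a base change of any bundle on $X$, even when (c) holds; it \emph{is}, however, a subquotient of $(E_1\oplus E_2)\otimes_k k'$ by a pullback--pushout (Baer sum) construction. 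So the chain (a) $\Rightarrow$ essential surjectivity $\Rightarrow$ (c) attempts to pass through an intermediate statement that is generally false. The fix retains your two genuine ingredients --- the simple objects of $\mathbf C(X\otimes_k k')$ descend (which is (a), using that simple in $\mathbf C$ is equivalent to stable), and $\mathrm{Ext}^1$ commutes with $-\otimes_k k'$ (your Fitting-decomposition argument for the $p$-linear operator over a perfect field is correct and is the key point) --- but feeds them into the subquotient criterion by induction on Jordan--H\"older length. With that replacement, and with the parameter space of the previous paragraph repaired, your plan can be carried through.
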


For the definition of the local fundamental group scheme, the reader
should consult \cite{mehta-sub}. In Proposition \ref{non_reduced}
we have shown
that
\[
\left\{\begin{array}{lll}\text{The universal torsor for the } \\
\text{ fundamental group } \\ \text{scheme is a reduced scheme} 
\end{array}\right\}\Longrightarrow
\text{\{Condition (b) in the above theorem holds.\}}
\]

As Vikram Mehta made us realize, the reverse implication need not be true and 
the arguments to follow are due to him. To construct a counter--example, we 
consider an abelian threefold $A$ and $\iota:X\hookrightarrow A$ a closed smooth 
surface defined by intersecting $A$ with a hyperplane section of high degree in 
some projective embedding $A\hookrightarrow \mathds P^N$. By ``Lefschetz's 
Theorem'' \cite[Theorem 1.1]{biswas--lef}, we have an isomorphism 
\[
\Pi^\mathrm{EF}(\iota):\Pi^{\mathrm{EF}}(X,x_0)\stackrel{\cong}{\longrightarrow} 
\Pi^{\mathrm{EF}}(A,x_0)
\]
so that, if $B\longrightarrow A$ is a pointed torsor under a finite group scheme 
with the property of being ``Nori reduced''  \cite[Proposition 3, p. 
87]{nori-thesis}, i.e. \[H^0(B,\mathcal O_B)=k;\]
 the same can then be said about the restriction of $B$ to $X$. Using the 
torsors 
\[
{[p]}:A\longrightarrow A\quad \text{($[p]$ is multiplication by $p$),}
\] 
we see that $X$ admits a ``Nori reduced'' torsor under a finite group scheme 
which is not reduced \emph{as a scheme}. (This follows from the factorization  $[p]=VF$ and the fact that $F^{-1}(Z)$ is never reduced if $Z\subseteq A$ is a proper closed sub--scheme.) By another application of the 
``Lefschetz's Theorem'' \cite[Theorem 1.1]{biswas--lef}, we obtain a bijection 
\[ 
S(A,r,t)\stackrel{\sim}{\longleftrightarrow} S(X,r,t). 
\]
Since the iteration of the Frobenius morphism $F^t_A:A\longrightarrow A$ sits in 
a commutative diagram 
\[
\xymatrix{  A\ar[r]^{[p^t]}\ar[dr] & A   \\  & A\ar[u]_{F_A}, }
\]
if ${F_A^t}^*E$ is trivial, then $[p^t]^*E$ is likewise; consequently, we obtain 
an injection 
\[
S(A,r,t)\hookrightarrow \left\{\begin{array}{ll}\text{isomorphism classes of 
simple}  \\ \text{representations of rank $r$ of 
$\ker\,[p]$}\end{array}\right\}.
\] This entails that $S(X,r,t)$ is always a finite set and we arrive at the 
desired counter--example to the above highlighted implication.

\begin{rmk}[Made after completion]\label{parameswaran_remark}In a recent 
discussion, Parameswaran called our attention to a simpler proof of the fact 
that a vector bundle $E$ on $X$ which becomes trivial after being pulled back by 
a finite morphism from a smooth and projective variety $f:Y\longrightarrow X$ in 
fact comes from a representation of the etale fundamental group of $X$ (compare 
Theorem \ref{main_separable1}). 
The main idea is to use the \emph{algebra} 
\[ 
f_*(\mathcal O_Y)_\mathrm{max} 
\] 
associated to a separable and finite  morphism $f:Y\longrightarrow X$ from a smooth projective variety $Y$ to $X$ (here the subscript ``max'' stands for the maximal semistable subsheaf). That this is in fact an algebra requires a proof and 
the reader is directed to \cite[Lemma 6.4]{BP}. One of the consequences of 
\cite{BP} (which Parameswaran was kind enough to explain to the second author) 
is that $f_*(\mathcal O_Y)_\mathrm{max}$ is the maximal \emph{etale} extension 
of $\mathcal O_X$ inside $f_*\mathcal O_Y$. Together with \cite[Proposition 
6.8]{BP}, the triviality of  $f^*E$ implies the triviality of the pull--back of 
$E$ to the \emph{finite etale} $X$--scheme 
$Y_\mathrm{max}=\mathrm{Spec}\,f_*(\mathcal O_Y)_\mathrm{max}$ and this enough 
to show that $E$ is essentially finite. The reader should also note that in 
\cite[\S 6]{BP}, the framework is such that the domain variety is smooth, which 
is not sufficient to obtain Theorem \ref{main_separable1} directly; but it is 
possible that the methods in \cite{BP} can be extended (for example, to a 
\emph{normal domain variety}) to give another proof of Theorem 
\ref{main_separable1}. 
\end{rmk}

\textbf{Acknowledgements.} We thank the referee for pertinent remarks which made the present text much clearer.


\end{document}